\theoremstyle{plain}
\newtheorem{theorem}{Theorem}[section]
\newtheorem{lemma}[theorem]{Lemma}
\numberwithin{equation}{section}
\numberwithin{theorem}{section}
\newcommand{\mc}[1]{{\mathcal #1}}
\newcommand{\mb}[1]{{\mathbf #1}}
\newcommand{\ms}[1]{{\mathscr #1}}
\newcommand{\bb}[1]{{\mathbb #1}}
\newcommand{\ind}{\mathbf{1}}
\renewcommand{\epsilon}{\varepsilon}
\renewcommand{\tilde}{\widetilde}
\title[
Quantitative ergodicity for the symmetric exclusion process
]
{
Quantitative ergodicity for the symmetric exclusion process with
stationary initial data
}
\author [L.\ Bertini]{Lorenzo Bertini}
\address{Lorenzo Bertini \hfill\break \indent
  Dipartimento di Matematica, Universit\`a di Roma `La Sapienza',
  \hfill\break \indent
  I-00185 Roma, Italy}
\email{bertini@mat.uniroma1.it}
\author [N.\ Cancrini]{Nicoletta Cancrini}
\address{Nicoletta Cancrini\hfill\break \indent
  DIIIE, Universit\`a dell'Aquila,
  \hfill\break \indent
  I-67100, L'Aquila, Italy}
\email{nicoletta.cancrini@univaq.it}
\author [G.\ Posta]{Gustavo Posta}
\address{Gustavo Posta\hfill\break \indent
  Dipartimento di Matematica, Universit\`a di Roma `La Sapienza',
  \hfill\break \indent
  I-00185 Roma, Italy}
\email{gustavo.posta@uniroma1.it}
\begin{document}

\noindent
\keywords{Exclusion process, Ornstein distance, Speed of convergence
  to equilibrium}

\subjclass[2010]
{Primary 
60K35; 
Secondary 
82C20. 
}

\begin{abstract}
  We consider the symmetric exclusion process on the $d$-dimensional
  lattice with initial data invariant with respect to space shifts and
  ergodic.
  It is then known that as $t$ diverges the distribution of the
  process at time $t$ converges to a Bernoulli product
  measure. Assuming a summable decay of correlations of the initial
  data, we prove a quantitative version of this convergence by
  obtaining an explicit bound on the Ornstein $\bar d$-distance. The
  proof is based on the analysis of a two species exclusion process
  with annihilation.
\end{abstract}

\maketitle
\thispagestyle{empty}

\section{Introduction}

The analysis of the speed of the convergence to equilibrium for Markov
processes is a major topic in probability theory. Referring to
\cite{lp} for a general overview, we focus the discussion to the case
of reversible stochastic lattice gases, i.e.\ conservative interacting
particles systems satisfying the detailed balance condition with
respect to a Gibbs measure.  If these processes are considered on a
bounded subset $\Lambda$ of the $d$-dimensional lattice they are
ergodic when restricted to the configurations with fixed number of
particles and the corresponding reversible measure is the finite
volume canonical Gibbs measure.  In the high temperature regime, in
\cite{CM,CMR,DP,DP2,LY} it has been shown that both the inverse of the
spectral gap and the logarithmic Sobolev constant grow as the square
of the diameter of $\Lambda$.  On the infinite lattice, stochastic
lattice gases are reversible with respect to the (infinite volume)
canonical Gibbs measures, see \cite[Thm.~2.14]{Ge}.  In the high
temperature regime, by \cite[Thm.~5.14]{Ge}, the extremal elements of
the set of the canonical Gibbs measures consist in the one parameter
family $\{\pi_\rho\}$ where $\pi_\rho$ is the grand-canonical Gibbs
measure with density, i.e.\ expected number of particles per site,
given by $\rho$.
Moreover, as follows from \cite[Thm.~1.72]{Ge}, the semigroup $P_t$,
$t\ge 0$ associated to a reversible stochastic lattice gases is
ergodic in $L_2(\pi_\rho)$ namely,
$\| P_t f - \pi_\rho(f)\|_{L_2(\pi_\rho)}\to 0$ for each
$f\in L_2(\pi_\rho)$.  A quantitative version of this statement can be
obtained when the function $f$ is local, i.e.\ it depends on
the particles configuration only through its value on finitely many
sites.  For this class of functions it has been shown,
for the exclusion and the zero range processes,
that
$\|P_tf - \pi_\rho(f)\|^2_{L_2(\pi_\rho)}\le C\, t^{-d/2}$ for some
constant $C=C(f)$ \cite{BZ,JLQY}.
The case in which the reversible probability is a
grand-canonical Gibbs measure in the high temperature regime is
discussed in \cite{BZ2,CCC,LaY} where a slightly worse bound is
proven.

We here consider the simple symmetric exclusion process. It corresponds to
the infinite temperature case and the probability measure $\pi_\rho$ is
the product Bernoulli measure with parameter $\rho\in [0,1]$.  If the
probability $\mu$ is a suitable local perturbation of
$\pi_\rho$ it has been proven in \cite{BZ} that
$\mathrm{Ent}\big(\mu P_t \big|\pi_\rho\big) \le C t^{-d/2}$ for some
constant $C=C(\mu)$, here $\mathrm{Ent}$ denotes the relative
entropy. See also \cite{Lina} for further details on this issue.  In
general, it appears to be quite difficult to characterize the
probabilities $\mu$ on the configuration space such that $\mu P_t$
converges to $\pi_\rho$ as $t\to\infty$.  However, as proven in
\cite[Thm.~VIII.1.47]{Li}, such convergence holds when $\mu$ is
stationary, i.e.\ invariant with respect to space shifts, 
and ergodic with density $\rho$. Our purpose
is to provide a quantitative version of this statement.  More
precisely, denoting by $\bar d$ the Ornstein distance on the set of
stationary probabilities \cite[\S~I.9.b]{S}, we
prove here that if $\mu$ is stationary, ergodic with
density $\rho$, and has absolutely summable correlations, then
$\bar d (\mu P_t, \pi_\rho) \le C t^{-\gamma(d)}$ for some constant
$C=C(\mu)$ and $\gamma(d)=d/4$ for $d< 4$ and $\gamma(d)=1$ for
$d\ge 4$; see Theorem~\ref{t:1} below.  The proof is achieved by
combining a simple coupling argument with the analysis on the decay of
the density for the two-species symmetric exclusion process with
annihilation \cite{bel1,bel2}, that relies on an analogous result for
the two-species independent random walks \cite{BL}.

Referring to \cite{BL} for more details, we next explain heuristically
the power law decay of the Ornstein $\bar d$ distance.
By \cite[Thm.~I.9.7]{S} the $\bar
d$ distance between $\mu P_t$ and $\pi_\rho$ can be bounded using 
a coupling  between $\mu P_t$ and
$\pi_\rho$ invariant with respect to space shifts:
$\bar d(\mu P_t,\pi_\rho) \le \bb P\big( \eta_0(t) \neq \zeta_0(t)
\big)$.
Here $(\eta_0(t),\zeta_0(t))$ are the occupation
numbers at the origin at time $t\ge 0$ of a two-species 
annihilating exclusion process with equal density. Namely, 
two species of particles that evolve on $\bb Z^d$
according to exclusion processes and annihilating when they meet.
Let $\rho(t)= \bb P\big( \eta_0(t) \neq \zeta_0(t) \big)$ be the
probability that the origin is occupied by either species of
particles. 
In the mean field approximation, $\rho(t)$ decays to zero 
according  to $\dot{\rho}(t) = -\rho(t)^2$ which would imply
$\bar d(\mu P_t,\pi_\rho)\approx t^{-1}$. This
approximation yields the correct behavior when $d\ge 4$ while for $d\le 3$
the Gaussian fluctuations of the initial data become relevant and, 
due to the underlying particle's diffusion, the decay becomes
$\bar d(\mu P_t,\pi_\rho)\approx t^{-d/4}$.

To our knowledge, the present analysis of the symmetric exclusion
process is the first example in which the quantitative ergodicity for
a stochastic lattice gas with stationary initial data has
been achieved. The arguments here developed cover directly the case of
independent random walks. 
We conclude by discussing possible extensions to other models.  As
mentioned before, the crucial ingredient in the proof is the
quantitative decay of the density for the two-species exclusion
process with annihilation.  This decay might be proven for other
attractive stochastic lattice gases such as the zero range process
with increasing rates, see e.g.\ \cite[Thm.~2.5.2]{KL}, or the special
class of reversible stochastic lattice gases in \cite[\S~4.1]{Liln}.
Another simple model for which the quantitative ergodicity could be
investigated is the inclusion process (SIP), indeed this model is self-dual
and a coupling with independent random walks has been constructed in
\cite{OR}.  For the generic case of reversible stochastic lattice
gases where the invariant measure is not a product measure, it seems
however difficult that coupling arguments suffice to establish the
quantitative ergodicity, cfr.\ the corresponding problem of the decay
to equilibrium for local functions in \cite{BZ2,CCC,JLQY,LaY}.  We
remark that another possible setting to discuss the quantitative
ergodicity for stochastic lattice gases with stationary initial data
$\mu$ is the decay rate of the relative entropy per site of
$\mu P_t$ with respect to $\pi_\rho$.
In view of \cite{Ma},
such decay would imply a quantitative decay on the $\bar d$ distance
between $\mu P_t$ and $\pi_\rho$.

\section{Notation and results}
\label{s:nr}

Let $\bb Z^d$ be the $d$-dimensional lattice.  We write
$\Lambda\subset\subset \bb Z^d$ when $\Lambda$ is a finite subset of
$\bb Z^d$.  Set $\Omega:=\{0,1\}^{\bb Z^d}$ that it is considered
endowed with the product topology and the corresponding Borel
$\sigma$--algebra.  Elements of $\bb Z^d$ will be called sites while
elements of $\Omega$ configurations.  For $\eta\in\Omega$ the value of
the configuration $\eta$ at the site $x$, denoted by
$\eta_x\in\{0,1\}$, is interpreted as the absence/presence of a
particle at $x$ and called occupation number. In particular, $\eta_0$
is the occupation number at the origin of $\bb Z^d$.

The simple symmetric exclusion process (SEP) is the Markov process on the
state space $\Omega$ whose generator acts on local functions
$f\colon \Omega\to \bb R$, i.e.\ functions depending on $\eta$ only
through the values $\{\eta_x\}$ for finitely many sites $x$, as
\begin{equation}
  \label{1}
  L f(\eta) = \sum_{\langle x,y\rangle}
  \big[ f\big(\eta^{x,y}\big) - f(\eta)\big].
\end{equation}
The sum is carried out over the unordered edges of $\bb Z^d$
and $\eta^{x,y}$ is the configuration obtained from $\eta$ by
exchanging the occupation numbers at $x$ and $y$,
\begin{equation}
\label{1.2}
  \eta^{x,y}_z :=
  \begin{cases}
    \eta_y & \textrm{if $z=x$,}\\
    \eta_x & \textrm{if $z=y$,}\\
    \eta_z & \textrm{otherwise.}
  \end{cases}
\end{equation}
We denote by $P_t$, $t\ge 0$, the semigroup generated by $L$ that acts
on the Banach space $\ms C(\Omega)$, the family of continuous function
on $\Omega$ endowed with the uniform norm.  We refer to
\cite[Ch.~VIII]{Li} for the construction of this process and its
properties.  In Theorem~1.44 there, it is proven in particular that a
probability $\mu$ on $\Omega$ is invariant for SEP if and only if
$\mu$ is exchangeable, equivalently $\mu$ is a mixture, i.e.\ a
possibly infinite convex combination, of i.i.d.\ Bernoulli measures.
 
Let $\ms P_\tau(\Omega)$ be the set of \emph{stationary} probabilities
on $\Omega$, i.e.\ the probabilities on $\Omega$ invariant with
respect to the space shifts on $\Omega$. Observe that
$\ms P_\tau(\Omega)$ is a convex set and the set of its extremal
points, denoted by $\ms P_{\tau,e}(\Omega)$, consists of the ergodic
probabilities.  For $\rho\in[0,1]$ let
$\pi_\rho\in \ms P_\tau(\Omega)$ the Bernoulli product probability
with parameter $\rho$.  In \cite[Thm.~VIII.1.47 ]{Li} it is proven
that if $\mu\in \ms P_{\tau,e}(\Omega)$ and $\mu(\eta_0)=\rho$ then
$\mu P_t$ weakly converges to $\pi_\rho$ as $t\to +\infty$.  The
purpose of the present analysis is to provide a quantitative version
of this statement with an explicit control on the rate of
convergence. This will be achieved when the probability $\mu$ has
absolutely summable correlations.

To formulate the quantitative ergodicity we need a distance on
$\ms P_{\tau}(\Omega)$. We shall use the so-called Ornstein
$\bar d$ distance. Given $\Lambda\subset\subset \bb Z^d$ let
$d_\Lambda$ be the distance on
$\Omega_\Lambda := \{0,1\}^\Lambda$ defined by
\begin{equation*}
  d_\Lambda (\eta,\zeta) := 
  \sum_{x\in \Lambda}
  |\eta_x -\zeta_x|.
\end{equation*}
Denoting by $\ms P (\Omega_\Lambda)$  the set of probabilities on
$\Omega_\Lambda$, let
$W_\Lambda$ be the $1$-Wasserstein distance on $\ms P (\Omega_\Lambda)$
associated to $d_\Lambda$, i.e.
\begin{equation*}
  W_\Lambda(\mu,\nu) := \inf_{ Q} \int \!Q (d\eta,d\zeta) \,
  d_\Lambda (\eta,\zeta),
\end{equation*}
where the infimum is carried out over all the \emph{couplings} $Q$ of $\mu$
and $\nu$, i.e.\ the set of probabilities on $\Omega\times\Omega$ with
marginals $\mu$ and $\nu$. 
For $\mu\in \ms P_{\tau}(\Omega)$ and $\Lambda\subset \bb Z^d$ let
$\mu_\Lambda$ be the marginal of $\mu$ on $\Omega_\Lambda$.
By a standard super-additive argument, if
$\mu,\nu\in \ms P_{\tau}(\Omega)$ then
\begin{equation}
  \label{2}
  \lim_{\Lambda\uparrow \bb Z^d}
  \frac 1 {|\Lambda|} W_\Lambda(\mu_\Lambda,\nu_\Lambda)
  = \sup_{\Lambda \subset\subset \bb Z^d}
  \frac 1 {|\Lambda|} W_\Lambda(\mu_\Lambda,\nu_\Lambda) 
  = : \bar d(\mu,\nu).
\end{equation}
Moreover, see e.g.\ \cite[Thm.~I.9.7]{S}, $\bar d$ defines a distance on $\ms
P_{\tau}(\Omega)$ that can be represented as
\begin{equation}
  \label{3}
  \bar d(\mu,\nu) = \inf_{Q}
  \int \!Q (d\eta,d\zeta) \, |\eta_0-\zeta_0|
\end{equation}
where we recall that $\eta_0$, $\zeta_0$ are the occupation numbers at
the origin and the infimum is carried out over all the \emph{stationary
  couplings} $Q$ of $\mu$ and $\nu$, i.e.\ the set of couplings
of $\mu$ and $\nu$ that are invariant with respect to space
shifts on $\Omega\times \Omega$. 
By \eqref{2}, the topology induced by $\bar d$ on  $\ms
P_{\tau}(\Omega)$ is finer than the topology induced by the weak
convergence.  
Denoting by $\mathrm{ent}(\mu|\nu)$ the relative entropy \emph{per unit
  of volume} of $\mu$ with respect to $\nu$, we finally mention two
remarkable properties of the Ornstein $\bar d$ distance, see e.g.\
\cite[Thm.~I.9.15~and~I.9.16]{S}:
(i) $\ms P_{\tau,e}(\Omega)$ is $\bar d$ closed,
(ii) for each $\rho\in[0,1]$ the map
$\ms P_{\tau,e}(\Omega)\ni \mu \mapsto \mathrm{ent}(\mu|\pi_\rho)$
is $\bar d$ continuous.

Given two functions $f,g$ and a probability $\mu$ we let
$\mu(f;g) := \mu(fg) -\mu(f)\mu(g)$ be the $\mu$-covariance of $f$ and
$g$. For $\mu\in\ms P_{\tau,e}(\Omega)$ we set
  \begin{equation}
    \label{11}
    A(\mu) := \sum_{x\in \bb Z^d} \big| \mu (\eta_0;\eta_x)\big|.
  \end{equation}
The quantitative ergodicity for SEP with stationary initial data is
then stated as follows.

\begin{theorem}
  \label{t:1}
  For each $d$ there exists a constant $C$ such that
  for any $t> 0$, $\rho\in[0,1]$, and $\mu\in\ms P_{\tau,e}(\Omega)$
  satisfying $\mu(\eta_0)=\rho$,
  \begin{equation}
    \label{12}
    \bar d\big(\mu P_t, \pi_\rho\big) \le C
    \, \frac {\sqrt{A(\mu)}}{t^{\gamma(d)}}
  \end{equation}
  where $\gamma(d) = d/4 $ if $d\le 4$ and $\gamma(d) =1 $ for $d> 4$.
\end{theorem}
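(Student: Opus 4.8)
The plan is to estimate $\bar d(\mu P_t,\pi_\rho)$ through the representation \eqref{3} by exhibiting one convenient stationary coupling of $\mu P_t$ and $\pi_\rho$ and then reading the integrand $|\eta_0-\zeta_0|$ as the presence of a discrepancy at the origin. First I would couple the initial data by the product measure $Q_0:=\mu\otimes\pi_\rho$, which is translation invariant, and let the pair $(\eta,\zeta)$ evolve under a single joint dynamics chosen so that (a) each marginal is a copy of the SEP \eqref{1}, so that the time-$t$ marginals are $\mu P_t$ and $\pi_\rho P_t=\pi_\rho$, and (b) the joint dynamics is translation invariant, so that the resulting law $Q_t$ is again translation invariant and hence an admissible competitor in \eqref{3}. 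Granting (a)--(b), \eqref{3} yields
\[
\bar d(\mu P_t,\pi_\rho)\ \le\ \int Q_t(d\eta,d\zeta)\,|\eta_0-\zeta_0|\ =\ Q_t\big(\eta_0\neq\zeta_0\big),
\]
so that everything reduces to bounding the probability of a discrepancy at the origin at time $t$.

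For the joint dynamics I would run the two SEP's under the shared stirring clocks (the basic coupling), modified only on edges whose two endpoints carry discrepancies of \emph{opposite} sign: on such edges the two clocks are decoupled and run independently. A direct inspection shows that in every configuration each marginal still exchanges the occupation numbers across each edge at rate one, so both marginals are genuine SEP's; that an isolated discrepancy performs a simple random walk; that two discrepancies of equal sign interact only by exclusion; and that a $+1$ adjacent to a $-1$ is removed at rate two. Writing $\sigma_x:=\eta_x-\zeta_x\in\{-1,0,1\}$, the field $\{\sigma_x(t)\}$ is therefore exactly a two-species symmetric exclusion process with annihilation, the $+1$'s and the $-1$'s playing the role of the two species, and $Q_t(\eta_0\neq\zeta_0)$ is its expected total particle density at time $t$.

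It remains to control this density, and here I would quote the decay estimates of \cite{bel1,bel2}, which in turn rest on the independent random walk analysis of \cite{BL}. Under $Q_0$ the two species start with the common density $\rho(1-\rho)$, so the signed field is balanced and its decay is governed by segregation: over a diffusive box $\Lambda$ with $|\Lambda|\sim t^{d/2}$ the number of survivors is of the order of the local excess $|N_+-N_-|$ of the majority species, i.e.\ of order $\sqrt{\mathrm{Var}(N_+-N_-)}$, where $N_+-N_-=\sum_{x\in\Lambda}(\eta_x-\zeta_x)$. Since $\zeta$ is an independent product field, using \eqref{11} and $A(\mu)\ge\rho(1-\rho)$ one gets
\[
\mathrm{Var}(N_+-N_-)=\sum_{x,y\in\Lambda}\mu(\eta_0;\eta_{y-x})+|\Lambda|\,\rho(1-\rho)\ \le\ C\,|\Lambda|\,A(\mu).
\]
Dividing the resulting bound $\sqrt{C|\Lambda|A(\mu)}$ by $|\Lambda|\sim t^{d/2}$ gives a density of order $\sqrt{A(\mu)}\,t^{-d/4}$ when $d\le 4$; for $d>4$ this fluctuation mechanism becomes subdominant and the density saturates at the mean-field rate $t^{-1}$. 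The two regimes combine into the exponent $\gamma(d)$ and produce \eqref{12}.

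I expect the last step to be the real obstacle. The Bramson--Lebowitz segregation asymptotics are classically proved for Bernoulli (product) initial data, whereas here the discrepancy field inherits the correlations of $\mu$; converting the heuristic above into a rigorous upper bound with the explicit prefactor $\sqrt{A(\mu)}$ and the sharp exponent $\gamma(d)$ requires controlling the exclusion interaction among like-signed particles and handling the crossover at the critical dimension $d=4$. This is exactly the content of the decay analysis of \cite{bel1,bel2} that I would invoke.
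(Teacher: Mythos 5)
Your proposal is correct and follows essentially the same route as the paper: the coupling you describe (shared stirring clocks, decoupled on edges carrying opposite discrepancies) is exactly the generator $\tilde L$ of \eqref{2.1}, the identification of the discrepancy field as a two-species SEP with annihilation is Lemma~\ref{t:2.1}, and the variance bound $\mathrm{Var}(N_+-N_-)\le C|\Lambda|A(\mu)$ together with the extension of the Belitsky--Bramson--Lebowitz decay to correlated initial data is precisely the content of Lemma~\ref{t:3.3} and Theorem~\ref{t:3.1}. You also correctly located the one genuine obstacle (non-product initial data), which the paper resolves exactly where you predicted.
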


The above results implies that if $\mu\in \ms P_{\tau,e}(\Omega)$,
$\mu(\eta_0)=\rho$, and $A(\mu)< +\infty$ then $\mu P_t$ converges to
$\pi_\rho$ in the topology induced by the $\bar d$ distance. In
particular, by remark (ii) above,
$\mathrm{ent}(\mu P_t |\pi_\rho)\to 0$.  It is unclear to us whether
this statement holds without the condition $A(\mu)<+\infty$.

As we next argue, there exist probabilities
$\mu\in\ms P_{\tau,e}(\Omega)$ such that
$\bar d\big(\mu P_t, \pi_\rho\big)$ decays to zero arbitrarily slow as
$t\to +\infty$. Fix two sites $x\neq y$.
By \eqref{2} it is enough to
exhibit $\mu\in\ms P_{\tau,e}(\Omega)$ with $\mu(\eta_0)=\rho$ such that
$\big( \mu P_t\big)\, \big(\eta_x=1,\eta_y=1\big)$ converges to
$\rho^2$ arbitrarily slow. By the so-called self-duality of SEP, see
\cite[Cor.~VIII.1.3]{Li},
\begin{equation*}
  \big( \mu P_t\big) \big(\eta_x=1\,,\,\eta_y=1\big) =
  \mb E_{(x,y)} \mu \big( \eta_{X(t)}=1\,,\, \eta_{Y(t)}=1 \big),
\end{equation*}
where $\big(X(t),Y(t)\big)$, $t\ge 0$, are two particles in exclusion
starting from $(x,y)$. Since correlations of $\mu$ can
decay arbitrarily slow, we deduce that as $t\to +\infty$ 
$\big( \mu P_t\big)\, \big(\eta_x=1,\eta_y=1\big)$ converges to
$\rho^2$ 
arbitrarily slow.

\section{Reduction to the two species SEP with annihilation}
\label{s:2}

In view of \eqref{3}, an upper bound for $\bar d(\mu P_t,\pi_\rho)$
can be obtained by exhibiting a stationary coupling between $\mu P_t$
and $\pi_\rho$.  Starting a time $t=0$ by a stationary coupling of
$\mu$ and $\pi_\rho$ and coupling the corresponding two SEP we obtain,
at time $t>0$, a stationary coupling between $\mu P_t$ and $\pi_\rho$
good enough to produce the bound \eqref{12}.
In words, the coupling between the two SEP can be described as follows.
Particles of the two processes that are at the same site jump 
together while particles alone jump independently.
In formulae, we consider the Markov process whose generator is the
closure of the operator $\tilde L$ that acts on local functions
$F\colon \Omega\times \Omega\to \bb R$ as
\begin{equation}
  \label{2.1}
  \begin{split}
  & \tilde L F \, (\eta,\zeta)  =\sum_{\langle x,y\rangle}
  \Big\{
  \big( 1- \ind_{\eta_x\neq \zeta_x,\eta_y\neq \zeta_y} \big)
  \big[ F(\eta^{x,y},\zeta^{x,y}) - F(\eta,\zeta)\big] \\
  &\qquad
  +\ind_{\eta_x\neq \zeta_x,\eta_y\neq \zeta_y}  
  \big[ F(\eta^{x,y},\zeta)  + F(\eta,\zeta^{x,y}) - 2 F(\eta,\zeta)
  \big] \Big\}.
  \end{split}  
\end{equation}
The corresponding semigroup  is denoted by $\tilde P_t$, $t\ge 0$.
Note that, even if not apparent from the notation, $\tilde L$ is the
operator used in \cite[\S~VIII.2]{Li} to prove the
attractiveness of the exclusion process.  The next statement can be
checked by a direct computation that is omitted.

\begin{lemma}
  \label{t:2.1}
  Let $(\eta(t) ,\zeta(t))$, $t\ge 0$ be the Markov process generated
  by $\tilde L$. Then  
  $\xi(t) := \big\{ \eta_x(t) - \zeta_x(t), \, x\in \bb Z^d \big\}$,
  $t\ge 0$ is a Markov process with state space
  $\mc S = \{-1,0,1\}^{\bb Z^d}$ and generator $\ms L$ that acts on
  the local functions $f \colon \ms S\to \bb R$ as
  \begin{equation}
    \label{2.2}
    \begin{split}
     \ms L f (\xi) = \sum_{\langle x,y\rangle}\Big\{
     \ind_{\xi_x\xi_y\neq -1}
     \big[ f(\xi^{x,y})-f(\xi)\big]
    +2\,  \ind_{\xi_x\xi_y=-1} \big[ f(\xi^{x,y;\dagger}  ) - f(\xi) \big]
    \Big\}
    \end{split}
  \end{equation}
  where $\xi^{x,y}$ has been defined in \eqref{1.2} and 
  \begin{equation*}
    \xi^{x,y;\dagger}_z :=
    \begin{cases}
      0 & \textrm{ if $z\in\{x,y\}$,} \\
      \xi_z & \textrm{ otherwise.}
    \end{cases}  
  \end{equation*}
\end{lemma}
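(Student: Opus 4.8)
The plan is to verify the lemma through the standard criterion for Markov functions (Dynkin's lumping criterion). Writing $\Phi(\eta,\zeta) := \eta - \zeta \in \mc S$, I would establish the generator intertwining
\begin{equation*}
  \tilde L\,(f\circ \Phi)(\eta,\zeta) = (\ms L f)\big(\Phi(\eta,\zeta)\big)
  \qquad\text{for every local } f\colon \mc S\to\bb R .
\end{equation*}
For local $f$ the composition $F:=f\circ\Phi$ is a local function on $\Omega\times\Omega$, hence lies in a core of $\tilde L$, and the identity above shows that $\tilde L F$ depends on $(\eta,\zeta)$ only through $\xi=\eta-\zeta$. By the lumping criterion this yields $\tilde P_t(f\circ\Phi)=(Q_t f)\circ\Phi$ for a semigroup $Q_t$, so $\xi(t)=\Phi\big((\eta,\zeta)(t)\big)$ is Markov with generator $\ms L$. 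Thus the whole statement reduces to checking the displayed identity edge by edge.

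For a fixed edge $\langle x,y\rangle$ the verification is a short case analysis on $(\xi_x,\xi_y)\in\{-1,0,1\}^2$. The first observation is that $\ind_{\eta_x\neq\zeta_x,\,\eta_y\neq\zeta_y}=\ind_{\xi_x\neq 0}\,\ind_{\xi_y\neq 0}=\ind_{\xi_x\xi_y\neq 0}$, so the two terms of $\tilde L$ in \eqref{2.1} are selected precisely according to whether both endpoints carry a discrepancy. When at least one of $\xi_x,\xi_y$ vanishes the ``glued'' move is active, and since $\Phi(\eta^{x,y},\zeta^{x,y})=\xi^{x,y}$ it contributes $f(\xi^{x,y})-f(\xi)$, matching the first term of $\ms L$ because here $\xi_x\xi_y=0\neq -1$. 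When both endpoints carry a discrepancy I would split into $\xi_x\xi_y=1$ and $\xi_x\xi_y=-1$.

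The crux — and the only point requiring genuine care — is the regime $\xi_x\xi_y=-1$ of opposite discrepancies, say $(\eta_x,\zeta_x)=(1,0)$ and $(\eta_y,\zeta_y)=(0,1)$. Here the single-side swaps produce $\Phi(\eta^{x,y},\zeta)$ and $\Phi(\eta,\zeta^{x,y})$; a direct substitution gives $(\eta^{x,y}-\zeta)_x=(\eta^{x,y}-\zeta)_y=0$ and likewise $(\eta-\zeta^{x,y})_x=(\eta-\zeta^{x,y})_y=0$, so \emph{both} side-swaps land on the annihilated configuration $\xi^{x,y;\dagger}$. Hence the second term of $\tilde L$ equals $2\,[f(\xi^{x,y;\dagger})-f(\xi)]$, exactly the annihilation term of $\ms L$, and this is where the factor $2$ originates. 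In the complementary regime $\xi_x\xi_y=1$ the two discrepancies have equal sign, both side-swaps leave $\xi$ unchanged, so the second term of $\tilde L$ vanishes, while the first term of $\ms L$ vanishes as well since $\xi^{x,y}=\xi$. Summing the matched edge contributions gives the intertwining identity.

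Finally I would record the Feller property. The operator $\ms L$ is a translation-invariant, finite-range generator on $\mc S=\{-1,0,1\}^{\bb Z^d}$ with uniformly bounded jump rates, so by the general construction of \cite[Ch.~I]{Li} the closure of $\ms L$ (defined on local functions) generates a Feller semigroup on $\ms C(\mc S)$; by uniqueness this semigroup coincides with the $Q_t$ obtained from the intertwining, and therefore $\xi(t)$ is precisely the Feller process generated by $\ms L$. The only real obstacle is the bookkeeping in the $\xi_x\xi_y=-1$ case above; everything else is routine.
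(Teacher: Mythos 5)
Your proof is correct and is precisely the ``direct computation'' that the paper alludes to but omits: the intertwining $\tilde L(f\circ\Phi)=(\ms L f)\circ\Phi$ verified edge by edge, followed by the standard lumping argument. Your case analysis is accurate, in particular the key observation that when $\xi_x\xi_y=-1$ both one-sided swaps land on the same annihilated configuration $\xi^{x,y;\dagger}$, which is exactly where the factor $2$ in \eqref{2.2} comes from.
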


The process generated by \eqref{2.2}, that will be referred to as
the \emph{two species SEP with annihilation}, can be described by
visualizing the sites $x$ where $\xi_x=-1$ as occupied by anti-particles
and the sites $x$ where $\xi_x=1$ occupied by particles.
Particles and anti-particles evolve following two independent SEP and
when a particle jumps over a anti-particle, or conversely a
anti-particle jumps over a particle, the two particles are
annihilated. It can be therefore seen as kinetic model corresponding
to the reaction
$\textrm{anti-particle} + \textrm{particle} \mapsto \emptyset$.  As we
show in the next section, an analysis of this dynamics yields an upper
bound for the Ornstein distance between two SEP with different initial
data.

\section{Long time behavior of the
  two species SEP with annihilation} 
\label{s:3}

In this section we consider the two species SEP with annihilation
obtaining -- for suitable stationary initial data -- an upper bound
for the probability that at time $t>0$ the origin is occupied by
either types of particles.  Given a probability $\wp$ on $\ms S$ the
law of the two species SEP with annihilation, i.e.\ the process
generated by \eqref{2.2}, and initial datum $\wp$ is denoted by
$\bb P_\wp$, the corresponding expectation by $\bb E_\wp$.  For
$\wp\in\ms P_{\tau,e}(\ms S)$, the set of stationary and ergodic
probabilities on $\ms S$, we set
\begin{equation}
    \label{3.1}
    B(\wp) :=
    \sum_{x\in \bb Z^d} \sum_{\alpha,\beta\in\{-1,1\}}
    \big| \wp\big(\xi_0=\alpha \, ; \,\xi_x=\beta\big)
    \big|
\end{equation}
where
$\wp\big(\xi_0=\alpha \, ; \,\xi_x=\beta\big) := \wp\big(\xi_0=\alpha
\, , \,\xi_x=\beta\big) -\wp\big(\xi_0=\alpha)
\wp\big(\xi_x=\beta\big)$.

\begin{theorem}
  \label{t:3.1}
  For each $d$ there exists a constant $C$ such that
  for any $t> 0$ and any $\wp\in\ms P_{\tau,e}(\ms S)$ satisfying
  $\wp(\xi_0=-1)=\wp(\xi_0=1)$
  \begin{equation}
    \label{3.2}
    \bb E_\wp |\xi_0(t)| 
    \le C
    \frac { \sqrt{B(\wp)} }{t^{\gamma (d)}},
  \end{equation}
  where $\gamma (d) = d/4 $ if $d\le 4$ and $\gamma (d) =1 $ for $d> 4$.
\end{theorem}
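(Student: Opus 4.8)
The plan is to reduce the statement to an upper bound on the particle density $\rho(t):=\bb E_\wp|\xi_0(t)|=\bb P_\wp(\xi_0(t)\neq 0)$ and to show that this density decays at the stated rate, with the dependence on the initial measure entering only through the variance-type quantity $B(\wp)$. First I would record the evolution of the density. Since the dynamics \eqref{2.2} is translation invariant it preserves $\ms P_\tau(\ms S)$, so $\bb E_\wp[\xi_y(t)^2]=\rho(t)$ for every $y$, and a direct computation of $\ms L\,|\xi_0|$ shows that the hopping part of the generator conserves $\rho$ in mean: writing $\ind_{\xi_0\xi_y\neq-1}=1-\ind_{\xi_0\xi_y=-1}$, the term $\bb E[\xi_y^2-\xi_0^2]$ vanishes by translation invariance, and on the event $\xi_0\xi_y=-1$ one has $\xi_0^2=\xi_y^2=1$. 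Hence only annihilation contributes and
\[
  \dot\rho(t)=-2\sum_{y\sim 0}\bb P_\wp\big(\xi_0(t)\xi_y(t)=-1\big)\le 0 ,
\]
where the sum is over the nearest neighbours $y$ of the origin. In particular $\rho$ is non-increasing and its decay is governed entirely by the nearest-neighbour particle/anti-particle correlations produced by the annihilation.

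The core of the argument is the decay of this density, which is precisely the subject of the analysis of the two species SEP with annihilation in \cite{bel1,bel2}; there the annihilating exclusion dynamics is compared, via a correlation/coupling argument, with the two species \emph{independent} annihilating random walks, for which the sharp density decay is due to \cite{BL}. The heuristic behind the exponent $\gamma(d)$ is the segregation picture. On a box $\Lambda$ of side $\ell$ the signed mass $S_\Lambda(t):=\sum_{x\in\Lambda}\xi_x(t)$ is conserved by the bulk dynamics (a reaction sends two opposite occupied sites to $0$, leaving $S_\Lambda$ unchanged) and varies only through the boundary flux; meanwhile the two species disappear in pairs, so after the diffusive time $\ell^2$ the box retains essentially only the local excess of one species, of typical size $\sqrt{\mathrm{Var}_\wp S_\Lambda}$, the mean being zero by the hypothesis $\wp(\xi_0=1)=\wp(\xi_0=-1)$. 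Taking $\ell\sim\sqrt t$ gives a density of order $\sqrt{\mathrm{Var}_\wp S_\Lambda}\,/\,|\Lambda|\sim \ell^{-d/2}\sim t^{-d/4}$ when $d\le 4$; for $d>4$ this fluctuation mechanism is subdominant and the mean-field rate $t^{-1}$ takes over.

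The new ingredient, relative to \cite{bel1,bel2,BL}, is that the initial measure is a general $\wp\in\ms P_{\tau,e}(\ms S)$ rather than a product (or finite-range) law, and that the only place $\wp$ enters the estimate above is through $\mathrm{Var}_\wp S_\Lambda$. Using $\bb E_\wp[\xi_0\xi_x]-\bb E_\wp[\xi_0]\bb E_\wp[\xi_x]=\sum_{\alpha,\beta\in\{-1,1\}}\alpha\beta\,\wp(\xi_0=\alpha;\xi_x=\beta)$ together with $|\alpha\beta|=1$ and translation invariance,
\[
  \mathrm{Var}_\wp S_\Lambda=\sum_{x,y\in\Lambda}\big[\bb E_\wp[\xi_x\xi_y]-\bb E_\wp[\xi_x]\bb E_\wp[\xi_y]\big]
  \le |\Lambda|\sum_{z\in\bb Z^d}\Big|\sum_{\alpha,\beta\in\{-1,1\}}\alpha\beta\,\wp(\xi_0=\alpha;\xi_z=\beta)\Big|\le |\Lambda|\,B(\wp).
\]
Feeding $\mathrm{Var}_\wp S_\Lambda\le|\Lambda|B(\wp)$ into the segregation bound produces the prefactor $\sqrt{B(\wp)}$ and the rate $t^{-\gamma(d)}$, which is the claim of Theorem~\ref{t:3.1}.

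The delicate point is turning the segregation heuristic into a rigorous \emph{upper} bound on $\rho(t)$ uniformly in $\wp$ — this is the hard direction already in \cite{BL}, since the differential inequality above only gives the mean-field rate $t^{-1}$ and undershoots the correct $t^{-d/4}$ for $d\le 4$. Two further technical steps must be controlled: the comparison of the annihilating \emph{exclusion} dynamics with the independent annihilating walks, handling the exclusion constraint as in \cite{bel1,bel2}; and the passage from i.i.d.\ to correlated initial data, where the Gaussian control of the excess $S_\Lambda$ must be replaced by the second-moment bound above, with an argument ensuring that higher-order correlations do not spoil the $\sqrt{B(\wp)}$ scaling. The borderline case $d=4$, where logarithmic corrections threaten the clean bound $t^{-1}$, will also require care.
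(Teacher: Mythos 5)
Your overall strategy matches the paper's: both defer the hard analytic work (the segregation mechanism, the comparison with annihilating independent walks, the exponent $\gamma(d)$) to \cite{BL,bel1,bel2}, and both identify that the only new ingredient needed for non-product initial data is a second-moment bound on the signed particle excess in a box that depends on $\wp$ only through $B(\wp)$. However, there is a concrete gap in where you establish that bound. You prove $\mathrm{Var}_\wp S_\Lambda \le |\Lambda|\,B(\wp)$ \emph{at time zero} and then assert that this can be ``fed into the segregation bound.'' But the estimate that the product-measure hypothesis is actually used for in \cite[Lemma~2.1]{bel1} is a bound on
\[
  \tilde{\bb E}_\wp\Big( N_{\Lambda,1}(\tilde\xi(t)) - N_{\Lambda,-1}(\tilde\xi(t))\Big)^2
\]
for the auxiliary two species SEP \emph{without} annihilation at every time $t\ge 0$, not at $t=0$. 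This is the content of the paper's Lemma~\ref{t:3.3}, and it does not follow from the time-zero bound by the conservation/boundary-flux argument you sketch: one must show that the stirring dynamics does not build up correlations that inflate the variance. The paper does this by realizing the non-annihilating process through two independent stirring processes, writing the time-$t$ two-point correlations as sums over the transition kernels $p_t^{\alpha,\beta}(x',y',x,y)$, and invoking the Liggett correlation inequality \cite[Prop.~VIII.1.7]{Li} to dominate the same-species kernel by the product $p_t(x',x)p_t(y',y)$, after which the time-zero quantity $B(\wp)$ reappears. Neither the stirring representation nor the Liggett inequality appears in your proposal, so the one piece of genuinely new mathematics in this theorem is missing.

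A secondary point: you flag the concern that ``higher-order correlations'' might spoil the $\sqrt{B(\wp)}$ scaling and leave it unresolved. The paper's answer is that no higher-order information is needed --- the argument of \cite{bel1,bel2} only consumes the second moment of the signed excess of the non-annihilating process, which is exactly what Lemma~\ref{t:3.3} supplies. Making that precise (i.e., checking that \cite[Lemma~2.1]{bel1} is the \emph{only} place the product structure enters) is part of the proof and should be stated, not left as a caveat.
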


The analogous statement for two species annihilating independent
random walks and stationary product initial condition has been proven
in \cite{BL}. Relying on the arguments there, the bound
\eqref{3.2} is proven in \cite{bel1,bel2} when the initial datum $\wp$
is a product measure.  This assumption on the initial datum is used
only in \cite[Lemma~2.1]{bel1}; however, as we show in
Lemma~\ref{t:3.3} below, it can be relaxed to the condition
$B(\wp)<+\infty$.  The rest of the arguments in \cite{bel1,bel2}
carries out unchanged to the present setting and yields the statement
in Theorem~\ref{t:3.1}.  Assuming it, we first conclude the proof of
the quantitative ergodicity for the SEP with stationary initial data.

\begin{proof}[Proof of Theorem~\ref{t:1}]
  Given $\mu\in \ms P_{\tau,e}(\Omega)$ with $\mu(\eta_0)=\rho$, let
  $\wp\in \ms P_\tau(\ms S)$ be the law of $\eta -\zeta$ where
  $\eta$ and $\zeta$ are independently sampled from $\mu$ and
  $\pi_\rho$.  By \eqref{11} and \eqref{3.1}, a direct computation
  yields
  \begin{equation*}
    \begin{split}
      B(\wp) & \le  \sum_{\alpha\in \{-1,1\}}
      [1-\rho(1-\rho)]
      \, \big| \mu\big(\eta_0 = \tfrac{1+\alpha}2 \,
         ;\,  \eta_0 = \tfrac{1+\alpha}2\big)\big|
         \\
         & \;\; + \sum_{\alpha\neq \beta \in \{-1,1\}}
    \rho(1-\rho) \, \big| \mu\big(\eta_0 = \tfrac{1+\alpha}2 \,
         ;\,  \eta_0 = \tfrac{1+\beta}2\big)\big|
    \\
   &  \;\; + \sum_{x\neq 0} \sum_{\alpha,\beta\in \{-1,1\}}
    \pi_\rho\big(\zeta_0 = \tfrac{1-\alpha}2 \,
    ,\,  \zeta_x = \tfrac{1-\beta}2\big)
    \, \big|\mu\big(\eta_0 = \tfrac{1+\alpha}2 \,
    ;\,  \eta_x = \tfrac{1+\beta}2\big)\big| 
    \\
    & \le A(\mu).
  \end{split}
\end{equation*}
Since the process $\big(\eta (t),\zeta(t)\big)$, $t\ge 0$ couples two SEP, the
probability $(\mu\otimes \pi_\rho) \tilde P_t$ is a coupling of
$\mu P_t$ and $\pi_\rho P_t=\pi_\rho$.  Here
$P_t$ and $\tilde P_t$, $t\ge 0$, are the semigroups associated to 
SEP and \eqref{2.1}, respectively.
Moreover, as the probability $\mu\otimes \pi_\rho$ on $\Omega\times
\Omega$ is invariant with respect to space shifts, 
$(\mu\otimes \pi_\rho) \tilde P_t$ is a stationary coupling of
$\mu P_t$ and $\pi_\rho$. 
According to Lemma~\ref{t:2.1}, $\xi(t) =\eta(t)-\zeta(t)$ is
distributed as the two-species SEP with annihilation, i.e.\ 
the process generated by \eqref{2.2}, whose law is denoted by $\bb
E_\wp$.
Hence, by \eqref{3} and Theorem~\ref{t:3.1},
\begin{equation*}
  \bar d \big( \mu P_t, \pi_\rho) \le  \bb E_\wp |\xi_0(t)| \le C
  \frac {\sqrt{A(\mu)}}{t^{\gamma(d)}}
\end{equation*}
for some constant $C$ depending only on $d$.
\end{proof}

In order to extend the result in \cite{bel1,bel2} to non-product
initial data, we need to realize the two species SEP with annihilation
on the probability space associated to the so-called stirring
process. This construction is achieved in two steps: from two
independent stirring processes we first obtain the two species SEP
without annihilation then, by a thinning procedure, we construct the
the two species SEP with annihilation.

We start by recalling the graphical construction of the stirring
process.  To each site $x\in \bb Z^d$ attach a copy of the positive
half-axis $\bb R_+$. For each edge $\langle x,y\rangle$ draw a set of
double-arrows sampled according to independent Poisson
point processes with intensity one.
The stirring process $W=\{W_x(t), \, x \in \bb Z^d, \, t\in \bb R_+\}$
is defined as follows: the value $W_x(t) \in \bb Z^d$ is obtained by
placing a marker at time $t=0$ at the point $x$ and letting it evolve
following the path dictated by the arrows.
Given $\zeta\in\{0,1\}^{\bb Z^d}$ the SEP with
initial datum $\zeta$ can be realized as
$\eta_x(t) = \sum_{y\in\bb Z^d} \zeta_y \ind_{\{x\}} (W_y(t))$,
$x\in \bb Z^d$. Let finally $\mb W=\big(W^-, W^+\big)$ be two
independent copies of the stirring process.

The \emph{two species SEP without annihilation} can be described as
follows.  Each site can be: empty, occupied by a particle, occupied by
a anti-particle, or occupied by both a particle and an anti-particle.
The anti-particles evolve according to the stirring process $W^-$
while particles according to $W^+$.  Setting
$\tilde{\mc S}:=\{0,-1,+1,\pm \}^{\bb Z^d}$, the two species SEP
without annihilation is thus the process on the state space $\tilde{\mc S}$
defined by
\begin{equation}
  \label{txw}
  \tilde\xi_x(t)=
  \begin{cases}
    - 1  & \textrm{ if }  \textrm{$\exists y \in \bb Z^d \colon
      \tilde\zeta_y \in\{-1,\pm\}$ and  $W_y^{-} (t) =x$,} \\ 
    & \phantom{\textrm{ if }}
    \textrm{$\nexists z \in \bb Z^d \colon\tilde\zeta_z \in \{1,\pm\}$ and 
         $W_z^{+} (t) =x$,}\\
    + 1  & \textrm{ if }  \textrm{$\nexists y \in \bb Z^d \colon
      \tilde\zeta_y \in\{-1,\pm\}$ and  $W_y^{-} (t) =x$,} \\ 
    & \phantom{\textrm{ if }}
    \textrm{$\exists z \in \bb Z^d \colon\tilde\zeta_z \in\{1,\pm\}$ and 
         $W_z^{+} (t) =x$,}\\
      \pm   & \textrm{ if } \textrm{$\exists y \in \bb Z^d \colon
      \tilde\zeta_y \in\{-1,\pm\}$ and  $W_y^{-} (t) =x$,} \\ 
    & \phantom{\textrm{ if }}
    \textrm{$\exists z \in \bb Z^d \colon\tilde\zeta_z \in\{1,\pm\}$ and 
         $W_z^{+} (t) =x$,}\\    
     0 & \textrm{otherwise.}
  \end{cases}
\end{equation}
Then $\tilde \xi(t)$, $t\ge 0$, is the Markov process
whose generator $\tilde{\ms L}$ acts on local functions
$f\colon \tilde{\ms S}\to \bb R$ as
\begin{equation}
  \label{3.3}
  \begin{split}
     \tilde{\ms L} f (\tilde \xi) =
     &
     \sum_{(x,y)} \Big\{
     \sum_{\alpha\in\{-1,+1\}} \sum_{\beta\in\{0,\pm\}}
     \ind_{\{\alpha\}} (\tilde \xi_x) \ind_{\{\beta\}} (\tilde \xi_y) 
     \big[ f(\tilde\xi^{x,y}) - f(\tilde\xi)\big]
     \\
     &+ \ind_{\{-1\}} (\tilde \xi_x) \ind_{\{+1\}} (\tilde \xi_y) 
     \big[ f(\tilde\xi^{x,y;\pm,0 }) +f(\tilde\xi^{x,y;0,\pm})
     - 2f(\tilde\xi)\big]
     \\
     &+ \ind_{\{0\}} (\tilde \xi_x) \ind_{\{\pm\}} (\tilde \xi_y) 
     \big[ f(\tilde\xi^{x,y;+1, -1 }) + f(\tilde\xi^{x,y;-1, +1 }) 
     - 2f(\tilde\xi)\big]
     \Big\},
   \end{split}
\end{equation}
where the leftmost sum is carried out over the set of \emph{oriented} edges of
$\bb Z^d$, $\tilde\xi^{x,y}$ has been defined in \eqref{1.2} and,
given $\alpha,\beta \in \{0,-1,+1,\pm \}$,
\begin{equation*}
      (\tilde\xi^{x,y;\alpha,\beta})_z:=
      \begin{cases}
        \alpha & \textrm{ if $z=x$,}\\
        \beta & \textrm{ if $z=y$,}\\
        \tilde\xi_z & \textrm{ otherwise.}
      \end{cases}
\end{equation*}
Given a probability $\tilde \wp$ on $\tilde{\ms S}$ we denote by
$\tilde{\bb P}_{\tilde\wp}$ the law of this process with initial
condition $\tilde\wp$ and by $\tilde{\bb E}_{\tilde\wp}$ the
corresponding expectation.

The two species SEP with annihilation $\xi(t)$, $t\ge 0$, can be
finally realized by a \emph{thinning} of two species SEP without
annihilation by recursively removing pairs of particles of different
species that occupy the same site.
This thinning procedure provides a coupling of the processes $\xi(t)$
and $\tilde\xi(t)$  such that for any $t\ge 0$ and $\alpha\in\{-1,1\}$
we have
$\{x\in\bb Z^d \colon \xi_t(x) =\alpha\} \subset \{x\in\bb Z^d \colon
\tilde \xi_t(x) =\alpha\}$ with probability one.

Given $\Lambda\subset\subset \bb Z^d$ and $\alpha\in \{-1,1\}$ we set
$N_{\Lambda,\alpha} (\xi) := \sum_{x\in\Lambda} \ind_{\{\alpha\}}
(\xi_x)$.  Namely, $N_{\Lambda,-1}$ and $N_{\Lambda,1}$ are
respectively the number of anti-particles and the number particles in
$\Lambda$. 
The same notation is used for $\tilde\xi\in \tilde{\ms S}$. 
In the next statement we regard $\wp\in\ms P_\tau(\ms S)$ as a
stationary probability on $\tilde{\ms S}$.

\begin{lemma}
  \label{t:3.3}
  Let $B$ as defined in \eqref{3.1}. Then for each
  $\Lambda \subset\subset \bb Z^d$, $t\ge 0$, and
  $\wp\in \ms P_\tau(\ms S)$ such that
  $\wp(\xi_0=-1)=\wp(\xi_0=1)$,
  \begin{equation*}
    \tilde{\bb E}_\wp \
    \Big( N_{\Lambda,1} (\tilde\xi(t)) - N_{\Lambda,-1} (\tilde\xi(t)) \Big)^2
    \le 2 \, |\Lambda| \,  B (\wp).
  \end{equation*}
\end{lemma}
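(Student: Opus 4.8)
The plan is to realize the two species process \emph{without} annihilation through the two independent stirrings $W^{-},W^{+}$ and thereby reduce the bound to a variance estimate for two independent symmetric exclusion processes. Set
\begin{equation*}
  n^{+}_x(t) := \ind\big[\exists z\colon \zeta_z=1,\ W^{+}_z(t)=x\big],
  \qquad
  n^{-}_x(t) := \ind\big[\exists y\colon \zeta_y=-1,\ W^{-}_y(t)=x\big],
\end{equation*}
the occupation variables of particles and anti-particles. Since $\zeta\in\ms S$, the realization \eqref{txw} reads $\ind_{\{1\}}(\tilde\xi_x(t))=n^{+}_x(t)\big(1-n^{-}_x(t)\big)$ and $\ind_{\{-1\}}(\tilde\xi_x(t))=n^{-}_x(t)\big(1-n^{+}_x(t)\big)$, so the mixed terms cancel and
\begin{equation*}
  N_{\Lambda,1}(\tilde\xi(t))-N_{\Lambda,-1}(\tilde\xi(t))
  =\sum_{x\in\Lambda}\big(n^{+}_x(t)-n^{-}_x(t)\big)
  =:\mc N^{+}-\mc N^{-}.
\end{equation*}
Here $(n^{+}_x(t))_x$ is an SEP started from $\ind_{\{1\}}(\zeta_\cdot)$ and driven by $W^{+}$, while $(n^{-}_x(t))_x$ is an SEP started from $\ind_{\{-1\}}(\zeta_\cdot)$ and driven by the independent stirring $W^{-}$; conditionally on $\zeta$ the two families are independent. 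I would first record the one-point identity $\tilde{\bb E}_\wp[\,n^{\pm}_x(t)\mid\zeta\,]=\sum_y \ind_{\{\pm1\}}(\zeta_y)\,p_t(y,x)$, where $p_t$ is the symmetric continuous-time random walk kernel, which holds because the SEP one-point function solves the discrete heat equation. Summing over $x\in\Lambda$ and $y$, using $\sum_y p_t(y,x)=1$, translation invariance of $\wp$, and the hypothesis $\wp(\xi_0=1)=\wp(\xi_0=-1)$, gives $\tilde{\bb E}_\wp[\mc N^{+}-\mc N^{-}]=0$, so the quantity to bound is a genuine variance.

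Next I would apply the law of total variance, conditioning on $\zeta$:
\begin{equation*}
  \tilde{\bb E}_\wp\big(\mc N^{+}-\mc N^{-}\big)^2
  =\mathrm{Var}_\wp\big(\tilde{\bb E}_\wp[\mc N^{+}-\mc N^{-}\mid\zeta]\big)
  +\tilde{\bb E}_\wp\big[\mathrm{Var}(\mc N^{+}-\mc N^{-}\mid\zeta)\big].
\end{equation*}
For the first term the one-point identity gives $\tilde{\bb E}_\wp[\mc N^{+}-\mc N^{-}\mid\zeta]=\sum_y \zeta_y\,P_\Lambda(y)$ with $P_\Lambda(y):=\sum_{x\in\Lambda}p_t(y,x)\in[0,1]$, so, since $\tilde{\bb E}_\wp[\zeta_y]=0$, this term equals $\sum_{y,y'}P_\Lambda(y)P_\Lambda(y')\,\mathrm{Cov}_\wp(\zeta_y,\zeta_{y'})$. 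Bounding $P_\Lambda(y')\le1$, and using that $|\mathrm{Cov}_\wp(\zeta_0,\zeta_w)|=\big|\sum_{\alpha,\beta\in\{-1,1\}}\alpha\beta\,\wp(\xi_0=\alpha;\xi_w=\beta)\big|\le\sum_{\alpha,\beta\in\{-1,1\}}|\wp(\xi_0=\alpha;\xi_w=\beta)|$ so that $\sum_{y'}|\mathrm{Cov}_\wp(\zeta_y,\zeta_{y'})|\le B(\wp)$ by \eqref{3.1} and translation invariance, together with $\sum_y P_\Lambda(y)=|\Lambda|$, bounds the first term by $|\Lambda|\,B(\wp)$.

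For the second term, conditional independence of the two SEPs splits it as $\tilde{\bb E}_\wp[\mathrm{Var}(\mc N^{+}\mid\zeta)]+\tilde{\bb E}_\wp[\mathrm{Var}(\mc N^{-}\mid\zeta)]$. The key input is the negative-correlation property of the symmetric exclusion process started from a deterministic configuration: for $x\ne x'$ one has $\mathrm{Cov}(n^{\pm}_x(t),n^{\pm}_{x'}(t)\mid\zeta)\le0$. Granting this and bounding the diagonal by the Bernoulli variance $\mathrm{Var}(n^{\pm}_x\mid\zeta)\le\tilde{\bb E}_\wp[n^{\pm}_x\mid\zeta]$, each conditional variance is at most its mean, so $\tilde{\bb E}_\wp[\mathrm{Var}(\mc N^{\pm}\mid\zeta)]\le\tilde{\bb E}_\wp[\mc N^{\pm}]=\wp(\xi_0=\pm1)\,|\Lambda|$. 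Hence the second term is at most $\big(\wp(\xi_0=1)+\wp(\xi_0=-1)\big)|\Lambda|$; since under the hypothesis the $x=0$ contribution to $B(\wp)$ is exactly $\wp(\xi_0=1)+\wp(\xi_0=-1)-\big(\wp(\xi_0=1)-\wp(\xi_0=-1)\big)^2=\wp(\xi_0=1)+\wp(\xi_0=-1)$, this is $\le|\Lambda|\,B(\wp)$. Adding the two contributions gives the claimed $\le2|\Lambda|\,B(\wp)$.

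The negative-correlation inequality for SEP is the one nontrivial ingredient and the place to be careful. It is classical and can be obtained from the self-duality of the exclusion process (expressing the two-point function through two dual stirring walkers) combined with the fact that two stirring walkers are more spread out than two independent ones; this is precisely the comparison underlying the arguments of \cite{BL} on which the remainder of the proof rests, and it may also be quoted from \cite{Li}. Everything else is bookkeeping of covariances against the definition \eqref{3.1} of $B(\wp)$.
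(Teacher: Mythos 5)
Your proof is correct, and it reaches the paper's bound by a genuinely different decomposition of the same variance. The paper expands $\tilde{\bb E}_\wp\big(N_{\Lambda,1}-N_{\Lambda,-1}\big)^2$ directly as a double sum over sites of $\Lambda$, splitting it into a diagonal part (computed exactly as $2\sigma|\Lambda|$) and an off-diagonal part, the latter bounded by writing the two-point function through the stirring kernels and using Liggett's inequality $p^{\alpha,\alpha}_t(x',y',x,y)\le p_t(x',x)p_t(y',y)$ for same-species pairs together with exact factorization for cross-species pairs. You instead condition on the initial configuration and apply the law of total variance, so that the initial-data correlations (your first term, bounded by $|\Lambda|B(\wp)$ via $\sum_y P_\Lambda(y)=|\Lambda|$ and translation invariance) and the dynamical correlations (your second term, bounded by $2\sigma|\Lambda|$ via negative correlations and the Bernoulli diagonal) are handled separately. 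The key analytic input is the same in both arguments: the negative correlation of the SEP/stirring two-point function started from a deterministic configuration, which you correctly single out as the one nontrivial ingredient and which is exactly \cite[Prop.~VIII.1.7]{Li} as invoked in the paper. Both proofs close in the same way, observing that $2\sigma$ is precisely the $x=0$ contribution to $B(\wp)$ under the hypothesis $\wp(\xi_0=-1)=\wp(\xi_0=1)$, so each piece is at most $|\Lambda|B(\wp)$ and the constant $2$ appears for the same reason. Your organization has the merit of cleanly separating the contribution of the randomness of $\wp$ from that of the dynamics; the paper's is slightly more economical in that the correlation inequality enters only once, through the explicit kernel comparison. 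The only point worth making explicit in a final write-up is the absolute convergence needed to justify $\mathrm{Var}_\wp\big(\sum_y\zeta_y P_\Lambda(y)\big)=\sum_{y,y'}P_\Lambda(y)P_\Lambda(y')\,\mathrm{Cov}_\wp(\zeta_y,\zeta_{y'})$, which is immediate since $\sum_y P_\Lambda(y)=|\Lambda|<\infty$ and the claimed bound is trivial when $B(\wp)=+\infty$.
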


\begin{proof}
  We write 
  $N_{\Lambda,1} (\tilde\xi(t)) - N_{\Lambda,-1} (\tilde\xi(t)) =
  \sum_{x\in\Lambda}\!
  \big[\ind_{\{+1,\pm\}}\big(\tilde\xi_x(t)\big)
  -\ind_{\{-1,\pm\}} \big(\tilde\xi_x(t)\big)\big]
  $
  and observe that its expectation with respect to  
  $\tilde{\bb P}_\wp$ vanishes.
  Thus
  \begin{equation}
    \label{d+od}
    \begin{split}
      &\tilde{\bb E}_\wp
      \Big(  N_{\Lambda,1} (\tilde\xi(t)) - N_{\Lambda,-1}
      (\tilde\xi(t))
            \Big) ^2
      = \sum_{x\in\Lambda} \sum_{\alpha\in\{-1,1\}}
      \tilde{\bb P}_\wp \big(\tilde\xi_x(t)\in\{\alpha,\pm\} \big)
      \\
      &\qquad\quad
      + \sum_{\substack{x,y \in \Lambda\\ x\neq y}}
      \sum_{\alpha,\beta\in \{-1,1\}} (-1)^{\frac{\alpha+\beta}2 +1}
      \; \tilde{\bb P}_\wp
      \big( \tilde\xi_{x}(t) \in \{\alpha,\pm\} \, ; \,
      \tilde \xi_{y}(t) \in \{\beta,\pm\} \big).  
        \end{split}
  \end{equation}
  Let $p_t(x,y):= \mb{P} \big (W^\alpha_x(t) =y\big)$, i.e.\
  the transition probability of the standard
  continuous time simple symmetric random walk on $\bb Z^d$. By
  \eqref{txw}, the diagonal term in \eqref{d+od}
  is given by
  \begin{equation*}
    \sum_{x\in\Lambda} \sum_{\alpha\in\{-1,1\}}  
    \sum_{y\in \bb Z^d}  \wp (\xi_y =\alpha) p_t(x,y)
    = 2\sigma \, |\Lambda|
  \end{equation*}
  where $\sigma:=\wp(\xi_0=-1)=\wp(\xi_0=1)$.
  
  By \eqref{txw}, for $x\neq y$ and $\alpha,\beta\in\{-1,+1\}$
  \begin{equation*}
    \begin{split}
      &\tilde{\bb P}_\wp \big( \tilde\xi_{x}(t) \in \{\alpha,\pm\} \, ; \,
      \tilde \xi_{y}(t)
      \in \{\beta,\pm\} \big)
      \\
      &\quad 
      =\sum_{x',y'} p^{\alpha,\beta}_t(x',y',x,y)
      \wp (\tilde\xi_{x'} =\alpha
      \,,\,\tilde\xi_{y'}=\beta\big)  
      \\
      &\quad\quad
      - \sum_{x'} p_t(x',x) \wp (\tilde \xi_{x'}=\alpha)\:
      \sum_{y'} p_t(y',y) \wp (\tilde\xi_{y'}=\beta)
    \end{split}
  \end{equation*}
  where
  $p^{\alpha,\beta}_t(x',y',x,y):=\mb P \big( W^\alpha_{x'}(t)
  =x\,,\,W^\beta_{y'}(t)=y \big)$.  Observe that if $\alpha\neq \beta$
  then $p^{\alpha,\beta}_t(x',y',x,y)=p_t(x',x) p_t(y',y)$ while, by
  the Liggett's inequality \cite[Prop.~VIII.1.7]{Li}, for $\alpha=\beta$
  we have $p^{\alpha,\beta}_t(x',y',x,y)\le p_t(x',x) p_t(y',y)$.
  By the invariance of $\wp$ with respect to space shifts
  we deduce that the off diagonal
  term in \eqref{d+od} can be bounded by
  \begin{equation*}
    \begin{split}
    &\sum_{\substack{x,y \in \Lambda\\ x\neq y}}
    \sum_{x',y'} \sum_{\alpha,\beta\in \{-1,1\}}
    p_t(x',x) p_t(y',y)
    \big| \wp \big(\tilde \xi_{0} =\alpha; \tilde \xi_{y'-x'}=\beta\big)\big|
    \\
    &\quad \le
    \sum_{x\in\Lambda} \sum_{z}
      \sum_{x',y'}
      p_t(x',x) p_t(y',x+z)
      \big| \wp \big(\tilde \xi_{0} =\alpha; \tilde \xi_{y'-x'}=\beta\big)\big|
      \\
      &\quad \le
    |\Lambda| \sum_{z}
      \sum_{z'}
      q_t(z',z)
      \big| \wp \big(\tilde \xi_{0} =\alpha; \tilde \xi_{z'}=\beta\big)\big|
      \le |\Lambda| \, B(\wp)
  \end{split}
\end{equation*}
where $q_t(z',z)$ is the transition probability for a rate two
symmetric random walk on $\bb Z^d$, i.e.\ the difference of two
i.i.d.\ rate one symmetric random walks on $\bb Z^d$.
Since
$2\sigma = \sum_{\alpha,\beta\in\{-1,1\}} \big|\wp\big(\xi_0=\alpha
\, ; \,\xi_0=\beta\big) \big|$ the statement follows.
\end{proof}

\section*{Acknowledgments}
We thank L.~Danella and D.~Gabrielli for introducing us to the
Ornstein distance.

\end{document}